\newcommand{\F}{\mathbb{F}}
\def \P {\mathbb{P}}
\newcommand{\ord}{\operatorname{ord}}
\theoremstyle{plain}
\newtheorem{thm}{Theorem}[section]
\newtheorem{defi}[thm]{Definition}
\newtheorem{lem}[thm]{Lemma}
\newtheorem{cor}[thm]{Corollary}
 \font\numberfont= pzcmi scaled
\titleformat{\chapter}[display]
  {\normalfont\Large 
  }
  {
   \filright
   \rule[32pt]{.7\linewidth}{4pt}
   \hspace{-8pt}
   \shadowbox{
   \begin{minipage}{.15\linewidth}
     \begin{center}
          \textsl{\bf {\large \chaptertitlename}}\\
       \vspace{1ex}
       {\bf {\numberfont \thechapter}}\\
       \vspace{1ex}
     \end{center}
   \end{minipage}}
  }
  {-10pt}
  {\filcenter
           \sl
           \bf
              \Huge
     }
  [\vspace{-1cm}\singlespacing\hfill\rule{.8\textwidth}{0.5pt}\\
\vskip-2.8ex\hfill\rule{.7\textwidth}{4pt}\onehalfspacing\vspace*{-1ex}]
\titlespacing{\chapter}{0pt}{*4}{*1}
\titleformat{\section}[block]
{\normalfont\bfseries} {\thesection}{0.5em}{}
\titleformat{\subsection}[block]
{\normalfont\large\bfseries} {\thesubsection}{0.5em}{}
\numberwithin{equation}{section}
\begin{document}

\title{Points on singular Frobenius nonclassical curves}

\author{\textbf{Herivelto Borges} \\
 \small{ICMC, Universidade de S\~ao Paulo, S\~ao Carlos, Brazil} \\
 \textbf{Masaaki Homma}\\
 \small{Department of Mathematics and Physics, Kanagawa University, Hiratsuka 259-1293, Japan}}

\maketitle
 \begin{abstract}
In 1990, Hefez and Voloch proved that the number of $\F_q$-rational points on a nonsingular plane  $q$-Frobenius nonclassical curve of degree $d$ is $N=d(q-d+2)$.
 We address these curves in the  singular setting. In particular, we  prove that $d(q-d+2)$ is a lower  bound on the number of $\F_q$-rational points on such curves of degree $d$.
 \end{abstract}
 \smallskip
\noindent \text{Keywords:} Algebraic curve, Frobenius nonclassical curve, Finite Field.

\smallskip
\noindent \text{2010 Mathematics Subject Classification:} 
 Primary 14H45; Secondary 14Hxx.



\section{Introduction}\label{intro}

Let $p$ be a prime number and $\F_q$ be the field with $q=p^s$ elements, for some integer $s\geq 1$. An irreducible plane curve $\mathcal{C},$ defined over $\F_q$, is called $q$-Frobenius nonclassical if the $q$-Frobenius map takes each simple point $P\in \mathcal{C}$ to the tangent line to $\mathcal{C}$ at $P$. 
In this case, there is an exponent $h$ with
$p \leq p^h \leq d$ so that the intersection multiplicity
$i(\mathcal{C}.T_P(\mathcal{C});P)$ of $\mathcal{C}$ and the tangent 
line
$T_P(\mathcal{C})$ at a simple point $P \in \mathcal{C}$
is at least $p^h$, and actually $i(\mathcal{C}.T_P(\mathcal{C});P) = p^
h$
holds for a general point $P \in \mathcal{C}$.

For convenience,
\begin{equation}\label{eq:cases}
\nu =\begin{cases}
p^h \hspace{0.3cm} &  \text{ if  $\mathcal{C}$ is $q$-Frobenius 
nonclassical}\\
1 & \text{if  $\mathcal{C}$ is $q$-Frobenius classical}
\end{cases}
\end{equation}
is called the $q$-Frobenius order of $\mathcal{C}$.

Frobenius nonclassical curves were introduced in  the work of St\"ohr and Voloch  \cite{SV}, and 
one  reason for highlighting this special class of curves comes from the following result (see \cite[Theorem 2.3]{SV}).

\begin{thm}[St\"ohr-Voloch]\label{SV} Let $\mathcal{C}$ be an irreducible plane curve of degree $d$ and genus $g$ defined over $\F_q$. If  $\mathcal{C}(\F_q)$ denotes the set of $\F_{q}$-rational points on $\mathcal{C}$, then
\begin{equation}\label{SV-bound}
\#\mathcal{C}(\F_q)\le \dfrac{\nu (2g-2) +(q+2)d}{2}.
\end{equation}
\end{thm}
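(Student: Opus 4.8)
The statement to establish is the Stöhr--Voloch bound, and the plan is to run their Wronskian argument for the plane linear series, isolating the one genuinely curve-specific input. Let $\widetilde{\mathcal C}\to\mathcal C$ be the normalization, a nonsingular projective curve of genus $g$ over $\F_q$ whose composite map to $\P^2$ is given by a base-point-free linear series $\mathcal D$ of degree $d$ and projective dimension $2$, defined over $\F_q$. It suffices to bound $\#\widetilde{\mathcal C}(\F_q)$, since every smooth $\F_q$-rational point of $\mathcal C$ lifts to one on the model, the finitely many singular points being handled by the separate comparison that is the theme of what follows. Fix a basis $f_0,f_1,f_2$ of $\mathcal D$, a local parameter $t$ at a running point $P$, normalized so that $z=(f_0,f_1,f_2)$ has order $0$ at $P$, and let $D_t^{(j)}$ denote the Hasse derivatives. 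The $\mathcal D$-order sequence is $\epsilon_0=0<\epsilon_1=1<\epsilon_2$, and the $\F_q$-Frobenius orders $\nu_0=0<\nu_1=:\nu$ are the least integers for which
\[
\det\begin{pmatrix} f_0^{\,q} & f_1^{\,q} & f_2^{\,q}\\ D_t^{(\nu_0)}f_0 & D_t^{(\nu_0)}f_1 & D_t^{(\nu_0)}f_2\\ D_t^{(\nu_1)}f_0 & D_t^{(\nu_1)}f_1 & D_t^{(\nu_1)}f_2\end{pmatrix}\not\equiv 0 ;
\]
one checks that $\{\nu_0,\nu\}\subset\{\epsilon_0,\epsilon_1,\epsilon_2\}$ and that $\nu$ is exactly the order of \eqref{eq:cases}, with $\nu=\epsilon_2$ in the nonclassical case (the relation $z^{(q)}\in\langle z,D_t^{(1)}z\rangle$ expressing that $\Phi(P)\in T_P(\mathcal C)$).

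\emph{The Frobenius divisor.} Interpreting the determinant above as a section of a line bundle and dividing out the base locus of the rows together with the differential $dt$ produces an effective divisor $S$. Its degree is read off row by row: the Frobenius row contributes $qd$, the two derivative rows contribute $d$ each, and the orders $\nu_0,\nu_1$ contribute $(\nu_0+\nu)(2g-2)$ through $dt$, so that
\[
\deg S=\nu(2g-2)+(q+2)d,\qquad \sum_{P}v_P(S)=\deg S .
\]
A local expansion of the determinant in $t$ gives, at every point, $v_P(S)\ge (j_1(P)-\nu_0)+(j_2(P)-\nu)=j_1(P)+j_2(P)-\nu$, where $0<j_1(P)<j_2(P)$ are the $(\mathcal D,P)$-orders.

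\emph{The key inequality at rational points.} The crux is to show $v_P(S)\ge 2$ for each $P\in\widetilde{\mathcal C}(\F_q)$; since $j_1(P)\ge 1$, this reduces to $j_2(P)\ge\nu+1$. When $\nu=1$ (the classical case) this is automatic, as $j_2(P)\ge\epsilon_2\ge 2$; the content is the nonclassical case $\nu=\epsilon_2>1$, where it says every rational point has higher-than-generic tangential contact. Here $\F_q$-rationality is used decisively: taking the local parameter $t$ at $P$ to be defined over $\F_q$, the local coefficients of $z$ lie in $\F_q$, so $z^{(q)}(t)=z(t^q)$, and the nonclassicality relation becomes $z(t^q)=z(t)+\beta(t)\,D_t^{(1)}z(t)$. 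Expanding the left side by the Hasse--Taylor formula $z(t^q)=\sum_{m\ge 0}D_t^{(m)}z\,(t^q-t)^m$ and projecting onto $D_t^{(\nu)}z$ in the frame $\{z,D_t^{(1)}z,D_t^{(\nu)}z\}$ yields $\sum_{m\ge\nu}\rho_m(t)\,(t^q-t)^m=0$ with $\rho_\nu\equiv 1$. If $P$ were not a hyperosculation point, the Wronskian $W=\det(z,D_t^{(1)}z,D_t^{(\nu)}z)$ would be nonzero at $P$, all $\rho_m$ would be regular there, and the term of order $\nu$ could not cancel---a contradiction. Hence $W(P)=0$, i.e.\ $j_2(P)>\nu$.

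Granting this, every rational point contributes at least $2$ to $S$, whence
\[
2\,\#\widetilde{\mathcal C}(\F_q)\le\sum_{P\in\widetilde{\mathcal C}(\F_q)}v_P(S)\le\deg S=\nu(2g-2)+(q+2)d ,
\]
and dividing by $2$ and transferring to $\mathcal C$ gives \eqref{SV-bound}. I expect the main obstacles to be the clean construction and degree computation of $S$ through the Hasse-derivative Wronskian, and above all the rational-point inequality $j_2(P)>\nu$, which is exactly where the Frobenius hypothesis and $\F_q$-rationality interact.
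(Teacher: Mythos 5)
The paper does not prove this theorem at all---it is quoted from St\"ohr--Voloch \cite{SV} (Theorem 2.3 there)---and your proposal is a correct reconstruction of their original argument: the Frobenius--Wronskian divisor $S$ with $\deg S=\nu(2g-2)+(q+2)d$, the local estimate $v_P(S)\ge (j_1(P)-\nu_0)+(j_2(P)-\nu)$, and weight at least $2$ at each rational point, noting that the paper's convention that $\#\mathcal{C}(\F_q)$ counts points on the nonsingular model makes your passage to the normalization exactly the right setting. One small repair in your key step: $W(P)=0$ is not literally equivalent to $j_2(P)>\nu$ (the vanishing could instead come from $j_1(P)\ge 2$ with $j_2(P)=\nu$), but since the binomial-determinant criterion shows $W(P)\neq 0$ precisely when $(j_1(P),j_2(P))=(1,\nu)$, the vanishing you derive still forces $j_1(P)+j_2(P)-\nu\ge 2$, which is all that the count $2\,\#\mathcal{C}(\F_q)\le\deg S$ requires.
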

Note that by $\F_{q}$-rational points on $\mathcal{C}$, we mean the $\F_{q}$-rational points
on the nonsingular model of $\mathcal{C}$.
Based on Theorem \ref{SV}, Frobenius nonclassicality can be considered as an obstruction to 
use the nicer upper bound  given by inequality (\ref{SV-bound}) with $\nu=1$.
That is a clear reason why one should  try to understand such curves better. At the same time, investigating Frobenius nonclassical curves is a way of searching for curves with many points.
For instance,  the Hermitian curve 
$$x^{q+1}+y^{q+1}=1,$$
over $\F_{q^2}$, and the Deligne-Lusztig-Suzuki curve over $\F_q$:
$$y^q-y=x^{q_0}(x^q-x),$$
where $q_0=2^s$, $s\geq 1$, and $q=2q_0^2$,
which are well known examples of curves with many points, are Frobenius non-classical. 

With regard to the number of rational points, a somewhat surprising fact was proved by Hefez and Voloch in the case of nonsingular curves (see \cite{HV}).

\begin{thm}[Hefez-Voloch] Let $\mathcal{X}$ be a nonsingular $q$-Frobenius nonclassical  plane curve of degree $d$ defined over $\F_q$. If $\mathcal{X}(\F_q)$ denotes the set of $\F_{q}$-rational points on $\mathcal{X}$, then
\begin{equation}\label{HV-sharp}
\#\mathcal{X}(\F_q)=d(q-d+2).
\end{equation}
\end{thm}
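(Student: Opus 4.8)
The plan is to read off the count from the St\"ohr--Voloch theory (Theorem~\ref{SV}) applied to the $2$-dimensional linear system cut out on $\mathcal{X}$ by lines, and to prove that the bound there is attained. For a smooth plane curve of degree $d$ one has $g=(d-1)(d-2)/2$, so $2g-2=d(d-3)$; the generic order sequence of the line system is $(0,1,\nu)$ with $\nu=p^h$ the $q$-Frobenius order, and the Frobenius orders are $(0,\nu)$. Thus Theorem~\ref{SV} gives
\[
\#\mathcal{X}(\F_q)\le \tfrac{1}{2}\big(\nu\,d(d-3)+(q+2)d\big)=\tfrac{1}{2}\,d\big(\nu(d-3)+q+2\big).
\]
It therefore suffices to establish two facts: that this inequality is an equality, and that $\nu(d-3)=q-2(d-1)$, which is exactly the relation turning the right-hand side into $d(q-d+2)$.

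First I would pin down the local contribution of an $\F_q$-rational point to the St\"ohr--Voloch (Frobenius) divisor $S$, of degree $\nu(2g-2)+(q+2)d$. Denoting by $F$ a defining form and by $\Phi$ the $q$-Frobenius, the hypothesis that $\mathcal{X}$ is $q$-Frobenius nonclassical is the statement $\Phi(P)\in T_P$ for all $P$, equivalently the polynomial identity $x_0^qF_{x_0}+x_1^qF_{x_1}+x_2^qF_{x_2}=F\cdot G$ with $\deg G=q-1$. For a general point $P$ the tangent line $T_P$ has contact exactly $\nu$ at $P$, while $\Phi(P)$ is a further \emph{simple} point of $T_P\cap\mathcal{X}$; at an $\F_q$-rational point $\Phi(P)=P$, so this simple point collapses onto $P$ and the contact order of $T_P$ at $P$ rises to $\nu+1$. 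Hence the local orders are $j_1=1$, $j_2=\nu+1$, and each rational point contributes exactly $(j_1-0)+(j_2-\nu)=2$ to $S$. A convenient device for these computations is the identity $\nabla\!\big(x_0^qF_{x_0}+x_1^qF_{x_1}+x_2^qF_{x_2}\big)=\operatorname{Hess}(F)\cdot(x_0^q,x_1^q,x_2^q)^{\mathsf t}$, which on $\mathcal{X}$ reduces to $\operatorname{Hess}(F)\cdot(x_0^q,x_1^q,x_2^q)^{\mathsf t}=G\,\nabla F$ and lets one read off the orders at $P$.

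Next I would prove that $S$ is supported \emph{exactly} on $\mathcal{X}(\F_q)$: every non-rational point contributes $0$, and no rational point is special enough to contribute more than $2$ (in particular no rational point has $j_2>\nu+1$). Granting this, $\deg S=2\,\#\mathcal{X}(\F_q)$, so $\#\mathcal{X}(\F_q)=\tfrac{1}{2}d\big(\nu(d-3)+q+2\big)$. It remains to produce the identity $\nu(d-3)=q-2(d-1)$; I would extract it from the geometry forced by smoothness together with Frobenius nonclassicality --- from the inseparability of the Gauss map (whose inseparable degree is controlled by $\nu$) and from the intersection $\mathcal{X}\cap\{G=0\}$, which by the Hessian identity is carried by the inflectional locus and has degree $d(q-1)$. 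Feeding this back into $\deg S=2\,\#\mathcal{X}(\F_q)$ yields the relation and hence $\#\mathcal{X}(\F_q)=d(q-d+2)$.

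The main obstacle is the global control of $S$: showing that no non-rational point (in particular no stray inflection point) enters its support and that every rational point contributes the minimal value $2$. This is precisely where the smoothness of $\mathcal{X}$ is indispensable, and it is also the step that forces the numerical identity relating $\nu$, $d$ and $q$. I would expect establishing this exact description of the support --- rather than the local order computation --- to be the heart of the argument.
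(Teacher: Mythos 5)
Your plan is not repairable as stated, because its two load-bearing claims are both false in general, and the paper's own example refutes them numerically. You want $S$ to be supported exactly on $\mathcal{X}(\F_q)$ with each rational point contributing exactly $2$, i.e.\ $\deg S=2\,\#\mathcal{X}(\F_q)$, which is equivalent to the identity $\nu(d-3)=q-2(d-1)$ that you then set out to prove. Take the curve $\mathcal{C}_1:x^{13}=y^{13}+z^{13}$ over $\F_{27}$ from Section 4: it is smooth and $27$-Frobenius nonclassical with $g=66$ and $\#\mathcal{C}_1(\F_{27})=208=d(q-d+2)$, and by \eqref{d-range} necessarily $\nu=3$ (taking $\nu=9$ would force $d\le 26/8$). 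Then $\deg S=\nu(2g-2)+(q+2)d=3\cdot 130+29\cdot 13=767$, while $2\,\#\mathcal{C}_1(\F_{27})=416$; and your identity reads $3\cdot 10=27-24$, i.e.\ $30=3$. What fails is precisely the step you yourself flagged as the heart of the matter: non-rational points \emph{do} enter the support of $S$. Indeed, from the computation in the proof of Lemma \ref{lemaHK}, at a linear branch one has $v_{\mathcal{P}}(S)-v_{\mathcal{P}}(R)=1$ if the center is in $PG(2,q)$ and $0$ otherwise; hence at every non-rational inflection point (where $j_2>\nu$, so $v_{\mathcal{P}}(R)>0$) one gets $v_{\mathcal{P}}(S)=v_{\mathcal{P}}(R)>0$. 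The Hermitian curve, where $\deg S=2N$ does hold and $j_2=\nu+1$ at rational points, is misleadingly special; also note that St\"ohr--Voloch gives at rational points only the inequality $v_P(S)\ge j_1+(j_2-\nu)$, not the exact value $2$.

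The workable route --- both Hefez--Voloch's original argument and the one this paper uses (the theorem is recovered as the smooth case of Corollary \ref{main1} via Lemma \ref{lemaHK}) --- is to compare $S$ with the ramification divisor $R$ instead of evaluating $\deg S$ alone. The Frobenius nonclassicality relation \eqref{fnc} gives
\begin{equation*}
v_{\mathcal{P}}(S)-v_{\mathcal{P}}(R)=\ord_{\mathcal{P}}(x^q-x)-\ord_{\mathcal{P}}(D_{\zeta}^{(1)}x),
\end{equation*}
and for a nonsingular curve every branch is linear, so this difference is exactly $1$ at rational points and exactly $0$ at all other points (the case $-(j_1-1)$ of \eqref{tame} with $j_1=1$). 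Summing over all places yields $\#\mathcal{X}(\F_q)=\deg(S-R)=d(q-1)-(2g-2)=d(q-d+2)$, using $g=(d-1)(d-2)/2$. The subtraction of $R$ cancels precisely the inflectionary contributions you could not control, and it explains a feature your plan contradicts: the final count is independent of $\nu$, whereas your scheme requires a $\nu$-dependent coincidence that holds for the Hermitian curve but for essentially no other smooth Frobenius nonclassical curve.
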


Let us recall that  if  $\mathcal{X}$ is a nonsingular $q$-Frobenius nonclassical  plane curve of degree $d$, and   $\nu>2$ is its  $q$-Frobenius order  defined in \eqref{eq:cases}, then  (see \cite[Theorem 8.77]{HKT})
\begin{equation}\label{d-range}
\sqrt{q}+1\leq d\leq \frac{q-1}{\nu-1}.
\end{equation}
Now note that if $\nu>3$ and $d$ is within the range given by \eqref{d-range}, then
\begin{equation}\label{compare}
d(q-d+2)>\frac{d(q+d-1)}{2},
\end{equation}
where the number on the right hand side of \eqref{compare} is the bound  given by Theorem \ref{SV} for the case $\nu=1$.
In other words, \eqref{HV-sharp} tells us that nonsingular Frobenius nonclassical curves of degree $d$ usually have many rational points in comparison with the Frobenius classical ones of the same degree. In this paper, we show that this statement could be applied more broadly if we were to drop the exclusivity on nonsingularity. More precisely, we prove the following:

\begin{thm}\label{main0}Let  $\mathcal{C}$ be  a   $q$-Frobenius nonclassical curve of degree $d$ and genus $g$.  If $M_q^S$ is the number of simple points of $\mathcal{C}$ in 
$PG(2,q)$, then
\begin{equation}\label{bound1}
M_q^S\geq d(q-d+2)+2(g^*-g) +\sum\limits_{P \in Sing(\F_{q})} m_P(m_P-2),
\end{equation}
where $m_P$ are the multiplicities of the singular points $P\in Sing(\F_{q})\subseteq PG(2,q)$ of $\mathcal{C}$,
and $$g^*:=\frac{(d-1)(d-2)}{2}-\sum\limits_{P \in Sing(\F_{q})} \frac{1}{2}m_P(m_P-1)$$ is its $\F_q$-virtual genus. Moreover, equality holds in \eqref{bound1} if and only if all branches of  $\mathcal{C}$ are linear.
\end{thm}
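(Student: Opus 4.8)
The plan is to run the St\"ohr--Voloch machinery on the nonsingular model $\tilde{\mathcal C}$ of $\mathcal C$ (a curve of genus $g$), equipped with the base-point-free linear series $\mathcal D=g^2_d$ cut out by lines, and to read off $M_q^S$ by comparing the \emph{two} divisors attached to $\mathcal D$: the ordinary ramification divisor $R$ and the $q$-Frobenius divisor $S$. First I would fix the order data. Because $\mathcal C$ is $q$-Frobenius nonclassical, the generic contact of the tangent line equals the Frobenius order $\nu$, so the $\mathcal D$-orders are $(\epsilon_0,\epsilon_1,\epsilon_2)=(0,1,\nu)$ and the Frobenius orders are $(\nu_0,\nu_1)=(0,\nu)$. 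The standard degree formulas then give $\deg R=(1+\nu)(2g-2)+3d$ and $\deg S=\nu(2g-2)+(q+2)d$, hence
\[
\deg(S-R)=(q-1)d-(2g-2).
\]
By the very definition of $g^*$, the asserted inequality \eqref{bound1} is the purely algebraic reformulation $M_q^S\ge (q-1)d-(2g-2)-\sum_{P\in \mathrm{Sing}(\F_q)}m_P$, so everything reduces to understanding $S-R$ locally.

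The core of the argument is the pointwise comparison $c_Q:=v_Q(S)-v_Q(R)$ for $Q\in\tilde{\mathcal C}$. At a $Q$ lying over a simple point of $\mathcal C$ the branch is linear, so $j_1(Q)=1$, and evaluating the Hasse--Wronskian determinants gives $v_Q(R)=j_2(Q)-\nu$ together with $v_Q(S)=j_2(Q)-\nu+\varepsilon(Q)$, where $\varepsilon(Q)=1$ if the $q$-Frobenius $\Phi$ fixes $Q$ and $\varepsilon(Q)=0$ otherwise. The extra unit is exactly the coincidence of the first two rows of the Frobenius determinant --- the image $\Phi(Q)$ and the point $Q$ --- when $\Phi(Q)=Q$. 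Thus $c_Q=1$ at simple $\F_q$-rational points and $c_Q=0$ elsewhere among simple points, which already reproves the Hefez--Voloch equality when $\mathcal C$ is smooth. In general, isolating the simple $\F_q$-points from the points of $\tilde{\mathcal C}$ lying over the (geometric) singular locus yields the identity
\[
M_q^S=\deg(S-R)-\sum_{P\in \mathrm{Sing}(\bar{\F}_q)}\ \sum_{B\ \text{over}\ P} c_B,
\]
the non-rational simple points contributing nothing.

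The hard part, and the main obstacle, will be the local analysis at the singular branches, i.e.\ bounding $\sum_{B\ \text{over}\ P}c_B$; here several branches collide, their tangents need not be $\F_q$-rational, and $\Phi$ may permute them. The two estimates I would aim to prove are: (a) if $P\in \mathrm{Sing}(\F_q)$ then each branch $B$ over $P$ satisfies $c_B\le m_B$, with equality if and only if $B$ is linear, so that $\sum_{B\ \text{over}\ P}c_B\le\sum_B m_B=m_P$; and (b) if $\Phi(P)\ne P$ then each branch over $P$ has $c_B\le 0$, again with equality if and only if $B$ is linear. The mechanism behind (a) is that $\Phi$ still fixes the common center $P$, so the Frobenius row and the point row of the determinant agree at $P$ along every branch, which forces $c_B$ up to the branch multiplicity precisely when the branch is smooth and strictly below it otherwise; (b) reflects that when the center is moved by $\Phi$ the Frobenius row no longer helps. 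Granting (a) and (b), the rational-singular sum is $\le\sum_{P\in \mathrm{Sing}(\F_q)}m_P$ and the non-rational-singular sum is $\le 0$, which combine with the identity above to give \eqref{bound1}. Finally, equality in \eqref{bound1} forces equality in every local estimate, hence every branch of $\mathcal C$ to be linear; conversely, linearity of all branches makes each $c_B$ meet its bound, so \eqref{bound1} becomes an equality. The delicate computations in (a)--(b), carried out by choosing adapted local parameters at each branch and expanding the two $3\times 3$ Hasse--Wronskians, are where essentially all the work lies.
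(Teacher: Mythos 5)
Your plan is in fact the paper's own strategy in lightly reshuffled form: the paper also works with the St\"ohr--Voloch divisors $R$ and $S$, uses $\deg(S-R)=(q-1)d-(2g-2)$, and evaluates $c_{\mathcal P}=v_{\mathcal P}(S)-v_{\mathcal P}(R)$ branch by branch (its Lemma \ref{lemaHK}); your algebraic reformulation of \eqref{bound1} as $M_q^S\ge (q-1)d-(2g-2)-\sum_{P\in Sing(\F_q)}m_P$ is correct, and your estimate (a) is the paper's step $B_q\le\sum_{P\in PG(2,q)}m_P$ in disguise, since a tame branch with rational center contributes exactly $c_B=1\le m_B$ and a wild one contributes $c_B\le 0$. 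One technical caveat: your claimed pointwise formulas $v_Q(R)=j_2-\nu$ and $v_Q(S)=j_2-\nu+\varepsilon(Q)$ are only lower bounds in general (equality depends on nonvanishing of binomial determinants mod $p$); what is exact, and what the paper actually uses, is the \emph{difference}, computed from the identity $\det(D_\zeta^{(\nu_i)}x_j)\cdot D_\zeta^{(1)}x=\det(D_\zeta^{(\epsilon_i)}x_j)\cdot(x^q-x)$, which gives $v_{\mathcal P}(S)-v_{\mathcal P}(R)=\ord_{\mathcal P}(x^q-x)-\ord_{\mathcal P}(D_\zeta^{(1)}x)$ at every place.

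The genuine gap is in the equality clause of your estimate (b), which you defer as routine Wronskian expansion but which is exactly where the paper's work lies. For a wild branch ($p\mid j_1$) whose center has $a\in\F_q$ but $b\notin\F_q$, the valuation count gives $c_B=-(k-j_1)$ with $k=\ord_{\mathcal P}(D_\zeta^{(1)}x)\ge j_1$, and $k=j_1$ is numerically possible (the term $j_1t^{j_1-1}$ vanishes in characteristic $p$, so nothing in the expansion forbids $\ord D_\zeta^{(1)}x=j_1$). Thus a non-linear branch centered at a non-rational point can a priori achieve $c_B=0$, which would destroy your ``equality iff linear'' claim in (b) and with it the forward direction of the equality statement in the theorem. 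Excluding this case is not a formal expansion but uses the Frobenius nonclassicality relation \eqref{fnc} itself: from $\ord_{\mathcal P}\bigl((x^q-x)D_\zeta^{(1)}y\bigr)=\ord_{\mathcal P}\bigl((y^q-y)D_\zeta^{(1)}x\bigr)$ one gets $\ord_{\mathcal P}(y^q-y)=\ord_{\mathcal P}(D_\zeta^{(1)}y)\ge s-1\ge 1$, forcing $b\in\F_q$; so a wild branch with $c_B=0$ must be centered at a rational point, where it is ruled out once the rational-centered branches are known to be linear. This two-step interplay (first force linearity at rational centers, then use \eqref{fnc} to push wild branches with $c_B=0$ back to rational centers) is the content of the paper's Lemma \ref{lemaHK}, and your sketch --- ``when the center is moved by $\Phi$ the Frobenius row no longer helps'' --- does not contain it; without it your proposal proves the inequality \eqref{bound1} but not the characterization of equality.
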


Note that the  bound  \eqref{bound1} does not depend on the Frobenius order $\nu$. A very interesting consequence of Thorem \ref{main0} is the following:

\begin{cor}\label{main1}Let  $\mathcal{C}$ be  a   $q$-Frobenius nonclassical curve of degree $d$.  If $M_q$ is the number of points of $\mathcal{C}$ in 
$PG(2,q)$, then
\begin{equation}
M_q\geq d(q-d+2),
\end{equation}
 and equality holds if and only if $\mathcal{C}$ is nonsingular.
\end{cor}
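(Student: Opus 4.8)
The plan is to derive Corollary \ref{main1} directly from Theorem \ref{main0}, which I am permitted to assume. The key observation is that $M_q$, the total number of points in $PG(2,q)$, splits as $M_q = M_q^S + |Sing(\F_q)|$, where $M_q^S$ counts simple points and the singular points contribute at least one each. So first I would write $M_q \geq M_q^S + \sum_{P \in Sing(\F_q)} 1$ and substitute the lower bound \eqref{bound1} for $M_q^S$.

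After substitution, the task reduces to showing that the error terms are nonnegative; that is, I would aim to prove
\begin{equation*}
2(g^*-g) + \sum_{P \in Sing(\F_q)} m_P(m_P-2) + \sum_{P \in Sing(\F_q)} 1 \geq 0.
\end{equation*}
Here the term $m_P(m_P-2)+1 = (m_P-1)^2 \geq 0$ for each singular point, since every singularity has multiplicity $m_P \geq 2$. The remaining term $2(g^*-g)$ must also be shown nonnegative: the $\F_q$-virtual genus $g^*$ is the arithmetic genus of the curve with only the $\F_q$-rational singularities resolved, so $g^* \geq g$ because passing to the full nonsingular model (resolving all singularities, including non-rational ones and infinitely near points) can only further decrease the genus. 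I would invoke the standard genus-drop formula to justify $g^* - g \geq 0$.

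For the equality characterization, I would argue that $M_q = d(q-d+2)$ forces every contributing inequality to be an equality simultaneously. The sum $\sum (m_P-1)^2 = 0$ forces $m_P = 1$ for all $P \in Sing(\F_q)$, but since $m_P \geq 2$ at a genuine singularity, this forces $Sing(\F_q) = \emptyset$; combined with $g^* = g$, which by the genus formula means the curve has no singularities at all (rational or otherwise), one concludes $\mathcal{C}$ is nonsingular. Conversely, if $\mathcal{C}$ is nonsingular then $g^* = g$, the singular-point sums are empty, and Theorem \ref{main0} (with all branches trivially linear) gives exact equality $M_q = M_q^S = d(q-d+2)$, recovering the Hefez--Voloch formula. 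The main subtlety I expect is the genus inequality $g^* \geq g$: one must argue carefully that resolving only the rational singularities yields a genus that still dominates the true geometric genus $g$, accounting for both non-rational singular points and the delta-invariant contributions from nonlinear or infinitely near branches.
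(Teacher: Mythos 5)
Your proposal is correct and follows essentially the same route as the paper: both derive the inequality from Theorem \ref{main0} together with $M_q\geq M_q^S$, the nonnegativity of $m_P(m_P-2)$ (for $m_P\geq 2$) and of $g^*-g$, and then characterize equality via $Sing(\F_q)=\emptyset$ and $g=g^*=(d-1)(d-2)/2$, with the converse resting on the equality case of Theorem \ref{main0} (in the paper, via Lemma \ref{lemaHK} and $M_q=B_q$ for a smooth curve). Your exact decomposition $M_q=M_q^S+\#Sing(\F_q)$ with the regrouping $m_P(m_P-2)+1=(m_P-1)^2$ is a cosmetic sharpening, and your explicit attention to the inequality $g^*\geq g$ makes precise a step the paper passes over with ``clearly.''
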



\section{Preliminaries}\label{prelim}

Let us begin by briefly recalling the notions of 
classicality and $q$-Frobenius classicality for plane
curves. For a more general discussion, including the notion and properties of branches, we refer to \cite{HKT} and \cite{HK}.

 Let $\mathcal{C}\subset \P^2$ be an irreducible  algebraic curve of degree $d$ and genus $g$.
The numbers $0=\epsilon_0<\epsilon_1=1<\epsilon_2$
represent all possible intersection multiplicities of $\mathcal{C}$ with lines
of $\P^2$ at a generic point of $\mathcal{C}$. Such a sequence is called the order
sequence of $\mathcal{C}$, and it can be  characterized as the smallest sequence (in
lexicographic order) such that $\det(D_{\zeta}^{\epsilon_i}x_j)\ne0$, where
$D_\zeta^{k}$ denotes the $k$th Hasse derivative with respect to a separating
 variable $\zeta$,  and $x_0,x_1,x_2$ are the coordinate functions on
$\mathcal{C}\subset  \P^2$. The curve $\mathcal{C}$ is called classical if $\epsilon_2=2$.

If $\mathcal{C}$ is defined over a finite field $\F_q$, then there is a smallest
integer $\nu \in\{1,\epsilon_2\}$
 such that
 
\begin{equation}\label{det}
\begin{pmatrix}
  x_0^q & x_1^q &  x_2^q\\
  x_0& x_1  &  x_2 \\
  D_{\zeta}^{\nu} x_0& D_{\zeta}^{\nu}x_1  &
D_{\zeta}^{\nu}x_2
\end{pmatrix}
\neq 0
\end{equation}
The number $\nu$  is called the $q$-Frobenius order of $\mathcal{C}$, and such a
curve is called $q$-Frobenius classical if $\nu=1$.

Associated to the curve  $\mathcal{C}$,  there exist two distinguished divisors $R$ and $S$,
which play an important role in estimating the number of $\F_q$-rational points of $\mathcal{C}$.  When the curve is Frobenius nonclassical,
some valuable information can be  obtained by comparing  the multiplicities $v_P(R)$ and $v_P(S)$  for the points $P \in \mathcal{C}$.  In general, computing  these multiplicities is tantamount to studying some functions in $\overline{\F}_{q}(x,y)$ given by  Wronskian determinants such as $\det(D_{\zeta}^{\epsilon_i}x_j)$ and \eqref{det}. This idea was first exploited by Hefez and Voloch,  in their investigation of  the nonsingular case \cite{HV}. As noted by Hirschfeld and Korchm\'aros in \cite{HK}, this idea can be useful in the singular case as well.

Let  $\overline{\F}_{q}(\mathcal{C}):=\overline{\F}_{q}(x,y)$ be the function field of an irreducible curve $\mathcal{C}: f(x,y)=0$. Recall that  for any given place $\mathcal{P}$ of $\overline{\F}_{q}(\mathcal{C})$  and a local parameter $t$ at $\mathcal{P}$,  one  can associate   a (primitive)  branch  $\gamma$ in special affine coordinates:
$$x(t)=a+a_1t^{j_1}+\cdots,   \quad       y(t)=b+b_1t^{s}+\cdots,$$
where  $s\geq j_1$.  The point $(a,b) \in \overline{\F}_{q}\times \overline{\F}_{q}$ is called
the center of the branch $\gamma$.

The  branch $\gamma$ is called linear if  $j_1=1$. If  $p\nmid j_1$ (resp. $p\mid j_1$) then the branch is called  tame (resp. wild). Obviously, linear branches are tame.

When the curve $\mathcal{C}: f(x,y)=0$ is defined over $\F_{q}$, then $\mathcal{C}(\F_q)$ will   denote the set of places of degree one in the function field $\F_{q}(\mathcal{C})$. Considering  the projective closure $F(x,y,z)=0$ of $\mathcal{C}$, we define the following numbers, which are clearly related to $\#\mathcal{C}(\F_q)$:
\begin{defi}\label{define}
\begin{enumerate}[\rm(i)]
\item $M_q^S=$ number of  smooth points of $F(x,y,z)=0$ in $PG(2,q)$.
\item $M_q=$ number of   points of $F(x,y,z)=0$ in $PG(2,q)$.
\item $B_q=$ number of branches of $\mathcal{C}$ centered at a point in $PG(2,q)$. 
\end{enumerate}
\end{defi}
Note that  

\begin{equation}\label{ineq0}
M_q^S\leq M_q\leq B_q \text{ and }  M_q^S\leq \#\mathcal{C}(\F_q)\leq B_q.
\end{equation}
Hereafter, $\mathcal{C}$ will denote an irreducible plane curve of degree $d$ and genus $g$
defined over $\F_q$. A relevant step to prove our main result is based on the following:

\begin{thm}[Hirschfeld-Korchm\'aros]\label{thmHK} Assume that  $\mathcal{C}$ has only tame branches. If $\mathcal{C}$ is a nonclassical and  $q$-Frobenius nonclassical curve, then
$$B_q\geq (q-1)d-(2g-2),$$
and equality holds if and only if every singular branch of $\mathcal{C}$ is centered at a point of $PG(2,q)$.
\end{thm}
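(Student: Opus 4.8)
The plan is to play the ramification divisor $R$ (recording the classical order sequence $0<1<\epsilon_2$) off against the Frobenius divisor $S$ defined by \eqref{det}, and to show that the branches centered in $PG(2,q)$ account precisely for the gap between their degrees. First I would record the two degree formulas from St\"ohr--Voloch theory, $\deg R=(1+\epsilon_2)(2g-2)+3d$ and $\deg S=\nu(2g-2)+(q+2)d$. Since $\mathcal{C}$ is nonclassical we have $\epsilon_2>1$, and since it is $q$-Frobenius nonclassical the order $\nu\in\{1,\epsilon_2\}$ must equal $\epsilon_2$; substituting $\nu=\epsilon_2$ gives
\[
\deg S-\deg R=(q-1)d-(2g-2),
\]
which is exactly the right-hand side of the asserted bound. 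Thus the theorem is equivalent to the inequality $\deg(S-R)\le B_q$ together with its equality discussion.

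The heart of the argument is a local comparison of $v_P(S)$ and $v_P(R)$ at each place $P$, for which I would fix a primitive branch parametrization $x(t),y(t)$ and compute both multiplicities through the Hasse derivatives appearing in $\det(D_\zeta^{\epsilon_i}x_j)$ and in \eqref{det}. Writing $0=j_0<j_1<j_2$ for the $(P)$-orders, one has $v_P(R)=(j_1-1)+(j_2-\epsilon_2)$. The claim I would aim to prove is the pointwise statement
\[
v_P(S)-v_P(R)=\begin{cases}1 & \text{if the center of }P\text{ lies in }PG(2,q),\\[2pt] \le 0 & \text{otherwise, with equality iff }P\text{ is linear.}\end{cases}
\]
Summing over all places then yields $\deg(S-R)=\sum_P\bigl(v_P(S)-v_P(R)\bigr)\le B_q$, since each branch centered in $PG(2,q)$ contributes exactly $1$ while every other branch contributes at most $0$.

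The mechanism behind the ``$+1$'' is that when the center $(a,b)$ of $P$ is $\F_q$-rational, the Frobenius coordinates satisfy $a^q=a$ and $b^q=b$, so the top row $(x_0^q,x_1^q,x_2^q)$ of \eqref{det} agrees with $(x_0,x_1,x_2)$ to higher order along the branch than it does at a non-rational center; this forced extra vanishing of the determinant is what separates $v_P(S)$ from $v_P(R)$ by one unit. Here the tameness hypothesis $p\nmid j_1$ is essential: it guarantees that the relevant Hasse derivatives do not collapse and that the leading terms of the Wronskian-type determinants can be read off, so that the order of vanishing is computed exactly rather than merely bounded. I expect this local computation---carefully tracking how the $q$-power row interacts with the osculating rows of a singular tame branch---to be the main obstacle, and the place where the hypotheses $\nu=\epsilon_2$ and tameness are genuinely used.

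Finally, the equality discussion falls out of the local statement. The only slack in $\deg(S-R)\le B_q$ comes from branches whose center is not in $PG(2,q)$ and which are nonlinear, i.e. the singular branches; each such branch contributes a strictly negative amount to $v_P(S)-v_P(R)$ and hence widens the gap below $B_q$. Consequently equality holds if and only if no singular branch is centered outside $PG(2,q)$, that is, if and only if every singular branch of $\mathcal{C}$ is centered at a point of $PG(2,q)$, as claimed.
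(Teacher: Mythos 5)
Your global scaffolding is exactly the paper's: the degree computation $\deg S-\deg R=(\nu-1-\epsilon_2)(2g-2)+(q-1)d=(q-1)d-(2g-2)$ using $\nu=\epsilon_2$, the summation of local contributions, and the equality analysis (slack comes only from nonlinear branches centered outside $PG(2,q)$) all match the argument the paper runs in Lemma \ref{lemaHK}, which extends this theorem of \cite{HK}. But there is a genuine gap at what you yourself flag as the heart: the pointwise dichotomy for $v_{\mathcal{P}}(S)-v_{\mathcal{P}}(R)$ is stated as a target, not proved, and the route you sketch for it would not go through. You propose to compute $v_{\mathcal{P}}(R)$ and $v_{\mathcal{P}}(S)$ separately from the $(\mathcal{P})$-orders, asserting $v_{\mathcal{P}}(R)=(j_1-1)+(j_2-\epsilon_2)$; in characteristic $p$ this is in general only a lower bound, exact precisely when $\det\bigl(\binom{j_i}{\epsilon_k}\bigr)\not\equiv 0 \pmod p$, and tameness ($p\nmid j_1$) does not guarantee that, since $j_2$ and $\binom{j_2}{\epsilon_2}$ can still misbehave. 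The same caveat afflicts $v_{\mathcal{P}}(S)$. Subtracting two quantities each controlled only by one-sided inequalities gives no handle on their difference, so ``carefully tracking leading terms'' of the two Wronskians cannot by itself yield the exact values $1$ versus $-(j_1-1)$.

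The missing idea is to never compute the two multiplicities separately. Frobenius nonclassicality \eqref{fnc} gives $(x^q-x)D_{\zeta}^{(1)}y=(y^q-y)D_{\zeta}^{(1)}x$, whence the factorization $\det(D_{\zeta}^{(\nu_i)}x_j)\cdot D_{\zeta}^{(1)}x=\det(D_{\zeta}^{(\epsilon_i)}x_j)\cdot(x^q-x)$ and therefore, exactly and at every place,
\begin{equation*}
v_{\mathcal{P}}(S)-v_{\mathcal{P}}(R)=\ord_{\mathcal{P}}(x^q-x)-\ord_{\mathcal{P}}(D_{\zeta}^{(1)}x),
\end{equation*}
with all the problematic binomial-determinant issues cancelling. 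For a tame branch $x(t)=a+a_1t^{j_1}+\cdots$ this evaluates immediately: $\ord_{\mathcal{P}}(D_{\zeta}^{(1)}x)=j_1-1$ since $p\nmid j_1$, while $\ord_{\mathcal{P}}(x^q-x)$ equals $j_1$ if $a\in\F_q$ and $0$ otherwise, giving $1$ or $-(j_1-1)$ as in \eqref{tame}. Note also that the same identity disposes of a case your dichotomy silently skips: if $a\in\F_q$ but $b\notin\F_q$, the left side would be $1$ while $\ord_{\mathcal{P}}(y^q-y)-\ord_{\mathcal{P}}(D_{\zeta}^{(1)}y)\leq 0$, a contradiction, so for tame branches $a\in\F_q$ forces the whole center into $PG(2,q)$. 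With this substitute for your unproved local claim, the rest of your proposal (summation, $\deg(S-R)=(q-1)d-(2g-2)$, and the equality criterion in terms of singular branches) is correct and coincides with the paper's argument.
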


The next lemma  extends  Hirschfeld-Korchm\'aros'  result, and  our proof  is built on theirs. In particular, all the definitions and notations, explained in   detail in \cite{HK}, will be borrowed.

\begin{lem}\label{lemaHK}If  $\mathcal{C}$ is $q$-Frobenius nonclassical, then there exist at least $(q-1)d-(2g-2)$ tame branches centered at a point of $PG(2,q)$.
In particular, 
\begin{equation}\label{ineq1}
B_q\geq (q-1)d-(2g-2).
\end{equation}
Moreover, if every  branch  centered at a point of $PG(2,q)$ is tame, then \eqref{ineq1} is an equality  if and only if all the remaining branches are linear.

\end{lem}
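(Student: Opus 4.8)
The plan is to convert the branch count into a global degree identity relating the two distinguished divisors $R$ and $S$, and then to extract that identity from a purely local, branch-by-branch comparison. Since $\mathcal{C}$ is $q$-Frobenius nonclassical, its Frobenius order satisfies $\nu=\epsilon_2$, so the Frobenius divisor $S$ of \eqref{det} has $\deg S=\epsilon_2(2g-2)+(q+2)d$, while the ramification divisor $R$ attached to the order sequence $(0,1,\epsilon_2)$ has $\deg R=(1+\epsilon_2)(2g-2)+3d$. Subtracting yields the unconditional identity
$$\deg S-\deg R=(q-1)d-(2g-2),$$
which is exactly the quantity we must produce. Expanding both degrees as sums of local multiplicities turns this into
$$(q-1)d-(2g-2)=\sum_{P}\bigl(v_P(S)-v_P(R)\bigr),$$
the sum running over all branches (places) of $\mathcal{C}$.

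The core is to bound each summand from above. Following Hirschfeld--Korchm\'aros, I would write a branch as $x(t)=a+a_1t^{j_1}+\cdots$, $y(t)=b+b_1t^{s}+\cdots$, read off its $(P)$-orders $0=j_0<j_1<j_2$, and use $v_P(R)=(j_1-1)+(j_2-\epsilon_2)$ together with the St\"ohr--Voloch expression for $v_P(S)$ coming from the $3\times 3$ determinant in \eqref{det}. The claim to establish is that $v_P(S)-v_P(R)=1$ for every tame branch centered at a point of $PG(2,q)$, and $v_P(S)-v_P(R)\le 0$ for every other branch. The underlying mechanism is that when the center is $\F_q$-rational the Frobenius row $(x_j^q)$ and the point row $(x_j)$ reinforce one another, injecting the factor $q$ responsible for the $(q-1)d$ term, whereas for a non-rational center these rows remain independent and no such gain appears. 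Granting this, if $T$ is the number of tame branches centered at points of $PG(2,q)$, then only those summands are positive and each equals $1$, so
$$(q-1)d-(2g-2)=\sum_P\bigl(v_P(S)-v_P(R)\bigr)\le T.$$
Since $T\le B_q$, this gives at once both the existence of $(q-1)d-(2g-2)$ tame rational branches and the inequality \eqref{ineq1}.

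The main obstacle is exactly the step that goes beyond Theorem \ref{thmHK}, where Hirschfeld--Korchm\'aros assume all branches tame: I must control $v_P(S)-v_P(R)$ for wild branches, for which $p\mid j_1$ and the leading coefficients of the pertinent Hasse derivatives vanish. The target there is $v_P(S)-v_P(R)\le 0$ for every wild branch, irrespective of its center. Heuristically, divisibility by $p$ forces the orders $j_1,j_2$, and hence $v_P(R)$, to jump by at least as much as any additional vanishing it creates in $v_P(S)$, so a wild branch can never contribute a positive term and is simply excluded from the tame count. Carrying this out --- tracking how $p\mid j_1$ simultaneously shifts the orders in both Wronskians --- is the delicate part of the argument.

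For the equality clause I would impose the hypothesis that every branch centered in $PG(2,q)$ is tame, so that $T=B_q$. Equality in \eqref{ineq1} then holds precisely when the global sum has no deficit, i.e.\ when $v_P(S)-v_P(R)=1$ for every rational branch and $v_P(S)-v_P(R)=0$ for every remaining branch. The local computation makes the first condition automatic for tame rational branches, while for a branch centered outside $PG(2,q)$ it shows $v_P(S)-v_P(R)=0$ exactly when the branch is linear, any non-linear branch strictly lowering the difference below $0$. Hence equality is equivalent to all the remaining branches being linear, as asserted.
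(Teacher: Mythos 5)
Your global skeleton coincides with the paper's: both arguments start from $\deg(S-R)=(q-1)d-(2g-2)$ and reduce the lemma to the local claim that $v_{\mathcal{P}}(S)-v_{\mathcal{P}}(R)=1$ at a tame branch centered in $PG(2,q)$ and $\leq 0$ at every other branch. But at the decisive point --- the wild branches, which is exactly where this lemma goes beyond Theorem \ref{thmHK} --- you offer only a heuristic (``$p\mid j_1$ forces $v_P(R)$ to jump at least as much as $v_P(S)$'') and explicitly defer the computation as ``the delicate part.'' That deferred step is the actual content of the lemma, and the route you sketch for it would not go through as stated: the local expressions you invoke, such as $v_P(R)=(j_1-1)+(j_2-\epsilon_2)$, are in general only lower bounds, with equality governed by the nonvanishing mod $p$ of certain binomial determinants --- and that nonvanishing is precisely what breaks down for wild branches. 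So computing $v_{\mathcal{P}}(S)$ and $v_{\mathcal{P}}(R)$ separately gives you no upper bound on their difference. The paper sidesteps this entirely with one identity your proposal is missing: the $q$-Frobenius nonclassicality relation \eqref{fnc} yields
$$\det(D_{\zeta}^{(\nu_i)} x_j)\cdot D_{\zeta}^{(1)} x=\det(D_{\zeta}^{(\epsilon_i)} x_j)\cdot (x^q-x),$$
whence the \emph{exact} formula $v_{\mathcal{P}}(S)-v_{\mathcal{P}}(R)=\ord_{\mathcal{P}}(x^q-x)-\ord_{\mathcal{P}}(D_{\zeta}^{(1)} x)$ at every place. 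After that, the wild case is a two-line computation: $D_{\zeta}^{(1)}x\neq 0$ (else \eqref{fnc} and primitivity clash), and $\ord_{\mathcal{P}}(D_{\zeta}^{(1)}x)=k\geq j_1$ since $p\mid j_1$ annihilates the leading term, giving \eqref{wild} and hence a nonpositive contribution.

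Your equality analysis also misses a genuine case, for the same reason. A wild branch with $a\in\F_q$ and $\ord_{\mathcal{P}}(D_{\zeta}^{(1)}x)=j_1$ is perfectly possible, and for it the difference is $-(k-j_1)=0$ even though the branch is non-linear; so your assertion that every non-linear remaining branch contributes strictly negatively is false as stated. The paper eliminates exactly this configuration by a further argument: comparing orders in $(x^q-x)D_{\zeta}^{(1)}y=(y^q-y)D_{\zeta}^{(1)}x$ forces $\ord_{\mathcal{P}}(y^q-y)=\ord_{\mathcal{P}}(D_{\zeta}^{(1)}y)\geq s-1\geq 1$, hence $b\in\F_q$, so such a branch would be centered at a point of $PG(2,q)$ and is excluded by the hypothesis that all such branches are tame. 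Without this step, the ``equality if and only if all remaining branches are linear'' clause remains unproved.
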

\begin{proof} We closely follow the notation used in \cite{HK}.
\\
Set
\begin{center}
$
\det(D_{\zeta}^{(\epsilon_i)} x_j)=
\begin{vmatrix}
D_{\zeta}^{(1)} x & D_{\zeta}^{(1)} y  \\ 
D_{\zeta}^{(p^m)} x & D_{\zeta}^{(p^m)} y
\end{vmatrix}
$
\quad
and 
\quad
$
\det(D_{\zeta}^{(\nu_i)} x_j)=
\begin{vmatrix}
x^q-x & y^q-y  \\ 
D_{\zeta}^{(p^m)} x & D_{\zeta}^{(p^m)} y
\end{vmatrix}
$
\end{center}
The $q$-Frobenius nonclassicality of   $\mathcal{C}$ gives

\begin{equation}\label{fnc}
\begin{vmatrix}
x^q-x & y^q-y  \\ 
D_{\zeta}^{(1)} x & D_{\zeta}^{(1)} y
\end{vmatrix}
=0,
\end{equation}
 and then  establishes the relation 
$$\det(D_{\zeta}^{(\nu_i)} x_j)\cdot D_{\zeta}^{(1)} x=\det(D_{\zeta}^{(\epsilon_i)} x_j)\cdot (x^q-x).$$
Therefore, for any place $\mathcal{P}$ of  $\overline{\F}_{q}(\mathcal{C})$,
\begin{equation}\label{eq1}
{v}_{\mathcal{P}}(S)-{v}_{\mathcal{P}}(R)=\ord_{\mathcal{P}}(x^q-x)-\ord_{\mathcal{P}}(D_{\zeta}^{(1)} x).
\end{equation}
Let $\gamma$ be the (primitive) branch associated to the place $\mathcal{P}$,  represented by
$$x(t)=a+a_1t^{j_1}+\cdots,  \quad    y(t)=b+b_1t^{s}+\cdots,$$
with $j_1\leq s$. If $\gamma$ is tame, i.e., $p\nmid j_1$, then it follows (see  \cite[proof of  Theorem 1.4]{HK})  that

\begin{equation}\label{tame}
{v}_{\mathcal{P}}(S)-{v}_{\mathcal{P}}(R)= \begin{cases} 
1,  \quad \text{if } (a,b)\in \F_q\times \F_q;\\
-(j_1-1),  \quad \text{otherwise}.
\end{cases} 
  \end{equation}
  Now  let us address the wild case, i.e., the case $p\mid j_1$. Note that if $D_{\zeta}^{(1)} x=0$ then, from \eqref{fnc}, we have $D_{\zeta}^{(1)} y=0$,
which contradicts the primitivity of $\gamma$. Hence, $\ord_{\mathcal{P}}(D_{\zeta}^{(1)} x) = k> j_1-1$ and \eqref{eq1} yield

\begin{equation}\label{wild}
{v}_{\mathcal{P}}(S)-{v}_{\mathcal{P}}(R)= \begin{cases} 
-(k-j_1),  \quad \text{if } a\in \F_q;\\
-k,  \quad \text{otherwise}.
\end{cases} 
  \end{equation}
  
  Therefore,  \eqref{tame}  and \eqref{wild} can be reduced to
    
\begin{equation*}
{v}_{\mathcal{P}}(S)-{v}_{\mathcal{P}}(R)= \begin{cases} 
1,  \quad \text{if $\gamma$ is tame  with  center  in } PG(2,q);\\
\leq 0,  \quad \text{otherwise}.
\end{cases} 
  \end{equation*}
  
  Hence, since $\deg (S-R)=d(q-1)-(2g-2)$,  we arrive at the desired lower bound for the number 
  of tame branches centered at a point of $PG(2,q)$.
  
  Now let us assume that every  branch   centered at a point of $PG(2,q)$ is tame. If $B_q=d(q-1)-(2g-2)$, then \eqref{tame} implies that the remaining tame branches are
  linear. In addition,  \eqref{wild} implies that any wild branch can be considered as
     $$x(t)=a+a_1t^{j_1}+\cdots,  \quad    y(t)=b+b_1t^{s}+\cdots,$$
with $2\leq j_1\leq s$,  $\ord_{\mathcal{P}}(D_{\zeta}^{(1)} x)=j_1$ and $a\in \F_{q}$. 
However, if this is the case, then from 
$$\ord_{\mathcal{P}}\Big((x^q-x)(D_{\zeta}^{(1)} y)\Big)=\ord_{\mathcal{P}}\Big((y^q-y)(D_{\zeta}^{(1)} x)\Big),$$ we obtain 
$$\ord_{\mathcal{P}}(y^q-y)=\ord_{\mathcal{P}}(D_{\zeta}^{(1)} y)\geq s-1\geq 1,$$ i.e.,
  $b\in \F_{q}$. Thus, by hypothesis, such branch must be tame, and then the assertion follows.
  The converse follows immediately from the fact that linear branches are automatically tame.
 
\end{proof}

\section{The result}

The aim of  this section is to prove  Theorem \ref{main0} and some of its  relevant  corollaries.

\text{}\\

{ \bf Proof of Theorem \ref{main0}.} Note that from Lemma \ref{lemaHK}  and the definition of $B_q$, we have
\begin{equation}\label{ineq2}
(q-1)d-(2g-2)\leq B_q \leq  \sum\limits_{P \in PG(2,q)} m_P.
 \end{equation}
Let $M_q^S$ be the number of  smooth $\F_q$-points on $\mathcal{C}$, and set $g^*=\frac{(d-1)(d-2)}{2}-\sum\limits_{P \in Sing(\F_{q})} \frac{1}{2}m_P(m_P-1)$. Then 
 \begin{eqnarray*} 
M_q^S & = &\sum\limits_{P \in PG(2,q)} m_P-\sum\limits_{P \in Sing(\F_{q})} m_P\\
& = &\sum\limits_{P \in PG(2,q)} m_P-\sum\limits_{P \in Sing(\F_{q})} m_P(m_P-1)+\sum\limits_{P \in Sing(\F_{q})} m_P(m_P-2)\\
& = &\sum\limits_{P \in PG(2,q)} m_P+(2g^*-2)-(d^2-3d)+\sum\limits_{P \in Sing(\F_{q})} m_P(m_P-2)\\
& = &\sum\limits_{P \in PG(2,q)} m_P-\Big((q-1)d-(2g-2)\Big)+d(q-d+2)+2(g^*-g)+\sum\limits_{P \in Sing(\F_{q})} m_P(m_P-2).
\end{eqnarray*}
Since \eqref{ineq2} gives  $\sum\limits_{P \in PG(2,q)} m_P-\Big((q-1)d-(2g-2)\Big)\geq 0$,
it follows that
\begin{equation}\label{ineq3}
M_q^S\geq d(q-d+2)+2(g^*-g)+\sum\limits_{P \in Sing(\F_{q})} m_P(m_P-2).
\end{equation}
Now note that equality on the latter case is equivalent to equality on both sides of \eqref{ineq2}.
Let us assume we have equality in \eqref{ineq3}. The condition $B_q = \sum\limits_{P \in PG(2,q)} m_P$ means that all branches centered at a point of $PG(2,q)$ are linear and then tame.  Using the  additional equality $B_q=(q-1)d-(2g-2)$,   Lemma \ref{lemaHK} implies that all branches of  $\mathcal{C}$ are linear. Conversely, the linearity of all branches of  $\mathcal{C}$
immediately gives $B_q =\sum\limits_{P \in PG(2,q)} m_P$ and, from  Lemma \ref{lemaHK},
$B_q=(q-1)d-(2g-2)$.
\qed

\text{}\\

{ \bf Proof of Corollary \ref{main1}}.  From \eqref{ineq0} and \eqref{bound1}, we clearly have $M_q\geq d(q-2+d)$.
Let us assume that equality holds. Then  \eqref{ineq0}  and \eqref{bound1}  imply $M_q^S=M_q$ and
$g=g^*$,  respectively.  The first equality means that  all points $\F_q$-points of $\mathcal{C}$ are smooth, and thus $g^*=(d-1)(d-2)/2$. The latter equality, in addition, gives  $g=(d-1)(d-2)/2$. Therefore, $\mathcal{C}$ is a smooth curve. Conversely, if $\mathcal{C}$ is smooth then $M_q=B_q$, and Lemma \ref{lemaHK} gives $B_q=(q-1)d-(2g-2)$. Since $g=(d-1)(d-2)/2$, the result follows.

\qed

The following additional consequences are also worth mentioning. 

\begin{cor}\label{cor0} Let $\mathcal{C}$ be a $q$-Frobenius nonclassical curve of degree $d$ whose singularities are ordinary. If the singular points have their tangent lines  defined over $\mathbb{F}_q$, then 
 $$\#\mathcal{C}(\mathbb{F}_q)= d(q-d+2) +\sum\limits_{P \in \mathcal{C}} m_P(m_P-1).$$
\end{cor}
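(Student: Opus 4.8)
The plan is to derive Corollary \ref{cor0} as a specialization of Theorem \ref{main0} under the two stated hypotheses: that all singularities are ordinary and that the tangent lines at each singular point are defined over $\F_q$. Recall that a point $P$ of multiplicity $m_P$ is an \emph{ordinary} singularity precisely when $\mathcal{C}$ has $m_P$ distinct branches through $P$, each with a distinct tangent line and each branch linear (smooth). Thus the first thing to establish is that under these hypotheses \emph{every} branch of $\mathcal{C}$ is linear: the smooth points contribute a single linear branch each by definition, and the ordinariness of the singularities guarantees that the $m_P$ branches centered at a singular point $P$ are all linear as well. Once all branches are linear, Theorem \ref{main0} gives \emph{equality} in \eqref{bound1}, so
\begin{equation*}
M_q^S = d(q-d+2) + 2(g^*-g) + \sum_{P \in Sing(\F_q)} m_P(m_P-2).
\end{equation*}

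Next I would compute the genus defect $2(g^*-g)$ explicitly. For an ordinary singularity of multiplicity $m_P$, the standard contribution to the difference between the arithmetic (virtual) genus and the geometric genus is $\frac{1}{2}m_P(m_P-1)$ per point, so summing over all singular points (which by the rationality-of-tangents hypothesis all lie in $Sing(\F_q)$) gives $g^* - g = \sum_{P}\frac{1}{2}m_P(m_P-1)$, i.e. $2(g^*-g) = \sum_{P} m_P(m_P-1)$. Substituting this into the displayed equality and combining the two sums over $Sing(\F_q)$ yields
\begin{equation*}
M_q^S = d(q-d+2) + \sum_{P \in Sing(\F_q)}\big[m_P(m_P-1) + m_P(m_P-2)\big].
\end{equation*}
This is a formula for smooth points, whereas the target counts $\#\mathcal{C}(\F_q)$, the places of degree one; the remaining work is to reconcile the two.

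The final step is to pass from $M_q^S$ to $\#\mathcal{C}(\F_q)$ and to produce the clean sum $\sum_{P\in\mathcal{C}} m_P(m_P-1)$ appearing in the statement. Because all branches are linear (hence of degree one as places) and the hypothesis forces every branch of $\mathcal{C}$ to be centered at an $\F_q$-rational point — here the assumption that the tangent lines at the singular points are defined over $\F_q$ is what pins the singular branches down to rational places — we have $\#\mathcal{C}(\F_q) = B_q$, with the $m_P$ branches at each ordinary $\F_q$-singularity contributing $m_P$ degree-one places while $M_q^S$ counts each such singular point only once (or not at all, since it is singular). Thus $\#\mathcal{C}(\F_q) = M_q^S + \sum_{P\in Sing(\F_q)} m_P$, and adding this $\sum_P m_P$ to the bracketed expression above collapses $m_P + m_P(m_P-2) = m_P(m_P-1)$, giving $\#\mathcal{C}(\F_q) = d(q-d+2) + \sum_{P} m_P(m_P-1)$, as claimed (the sum over $\mathcal{C}$ being effectively a sum over $Sing(\F_q)$ since $m_P(m_P-1)=0$ at smooth points).

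I expect the main obstacle to be the careful bookkeeping in this last paragraph: correctly relating the three counts $M_q^S$, $M_q$, and $\#\mathcal{C}(\F_q) = B_q$ at the ordinary singular points, and verifying that the rationality-of-tangents hypothesis is exactly what is needed to ensure all $m_P$ branches at each singular point are centered at — and are — degree-one $\F_q$-places (so that they are counted in $B_q$). The algebraic identity $m_P(m_P-1) + m_P(m_P-2) + m_P = 2m_P(m_P-1)$ must be tracked against whether one is counting the singular point once in $M_q^S$; I would double-check the constant offsets by testing the formula on a small explicit example, such as a nodal or cuspidal Frobenius nonclassical curve, before committing to the final normalization.
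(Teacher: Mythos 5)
Your overall route is the same as the paper's: show every branch is linear (smooth points contribute one linear branch each, ordinary singularities contribute $m_P$ linear branches), invoke the equality case of Theorem \ref{main0}, and then pass from $M_q^S$ to $\#\mathcal{C}(\F_q)$ by adding $m_P$ degree-one places per singular point, using the rational-tangents hypothesis to see that Frobenius fixes each branch. However, there is a genuine error in your genus-defect step: you treat $g^*$ as the arithmetic genus $\frac{(d-1)(d-2)}{2}$ and conclude $2(g^*-g)=\sum_P m_P(m_P-1)$. In the paper $g^*$ is the $\F_q$-\emph{virtual} genus, which already subtracts $\frac{1}{2}m_P(m_P-1)$ for each singular point in $PG(2,q)$. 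Under the hypotheses every singular point does lie in $PG(2,q)$ (an ordinary point of multiplicity $m_P\geq 2$ has at least two distinct tangent lines defined over $\F_q$, whose intersection is $\F_q$-rational), and since all singularities are ordinary the geometric genus is exactly $\frac{(d-1)(d-2)}{2}-\sum_P \frac{1}{2}m_P(m_P-1)$. Hence $g^*=g$ and the term $2(g^*-g)$ \emph{vanishes}; it does not produce $\sum_P m_P(m_P-1)$.

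This error then corrupts the bookkeeping you yourself flagged: with your value of the genus defect, adding $\sum_P m_P$ to your intermediate formula for $M_q^S$ would give $\#\mathcal{C}(\F_q)=d(q-d+2)+2\sum_P m_P(m_P-1)$, via precisely the identity $m_P(m_P-1)+m_P(m_P-2)+m_P=2m_P(m_P-1)$ you noted as suspicious. Your final display silently drops one copy of $m_P(m_P-1)$, so the stated conclusion comes out right only because a second slip cancels the first. The correct chain, which is the paper's, is: $g^*=g$, so equality in \eqref{bound1} reads $M_q^S=d(q-d+2)+\sum_{P\in Sing(\F_{q})} m_P(m_P-2)$; then $\#\mathcal{C}(\F_q)=M_q^S+\sum_{P\in Sing(\F_{q})} m_P$ and the collapse is simply $m_P(m_P-2)+m_P=m_P(m_P-1)$. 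The remaining ingredients of your argument---linearity of all branches giving the equality case of Theorem \ref{main0}, and the identification of the $m_P$ branches at each singular point as degree-one places---are sound and match the paper.
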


\begin{proof} Note that all singularities are ordinary and defined over $\mathbb{F}_q$. Thus 
$g^*=g$, and   equality in \eqref{bound1} holds.  That is,

$$M_q^S=d(q-d+2) +\sum\limits_{P \in Sing(\F_{q})} m_P(m_P-2).$$

On the other hand, since  the  tangent lines  of the singular points are defined over $\mathbb{F}_q$, each such point $P$ gives rise to  exactly $m_P$   $\mathbb{F}_q$-rational points of 
$\mathcal{C}$. Therefore 
$$\#\mathcal{C}(\mathbb{F}_q)=M_q^S+\sum\limits_{P \in Sing(\F_{q})} m_P=d(q-d+2) +\sum\limits_{P \in Sing(\F_{q})} m_P(m_P-1),$$ which gives the result.

\end{proof}

\begin{cor}\label{cor1} Let $\mathcal{C}$ be a $q$-Frobenius nonclassical curve of degree $d>1$.
Then $$d\geq \sqrt{q}+1,$$ and equality holds if and only if $\mathcal{C}$ is ($\F_{q}$-isomorphic to) the Hermitian curve.
\end{cor}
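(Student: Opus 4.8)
The plan is to sandwich the number of degree-one places $\#\mathcal{C}(\F_q)$ between the lower bound supplied by Theorem \ref{main0} and the Hasse--Weil upper bound, and then read off the degree inequality by a short factorization. First I would observe that both correction terms on the right-hand side of \eqref{bound1} are nonnegative: one has $g^\ast\ge g$, since the geometric genus equals $(d-1)(d-2)/2$ minus the sum of all $\delta$-invariants, whereas $g^\ast$ subtracts only the quantities $\tfrac12 m_P(m_P-1)\le \delta_P$ over the singularities rational over $\F_q$; and $m_P(m_P-2)\ge 0$ because $m_P\ge 2$ at every singular point. Hence Theorem \ref{main0} already gives $M_q^S\ge d(q-d+2)$, and combining with \eqref{ineq0} yields
\begin{equation*}
\#\mathcal{C}(\F_q)\ \ge\ M_q^S\ \ge\ d(q-d+2).
\end{equation*}

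On the other hand, applying the Hasse--Weil bound to the nonsingular model of $\mathcal{C}$, together with the estimate $g\le (d-1)(d-2)/2$ for the genus of a plane curve of degree $d$, gives
\begin{equation*}
\#\mathcal{C}(\F_q)\ \le\ q+1+2g\sqrt{q}\ \le\ q+1+(d-1)(d-2)\sqrt{q}.
\end{equation*}
Writing $m=\sqrt{q}$ and chaining the two displays, I would rearrange the resulting inequality $d(m^2-d+2)\le m^2+1+(d-1)(d-2)m$ into the factored form
\begin{equation*}
(m+1)(d-1)(d-m-1)\ \ge\ 0.
\end{equation*}
Since $m+1>0$ and $d-1>0$ (as $d>1$), this forces $d-m-1\ge 0$, that is $d\ge\sqrt{q}+1$, which is the asserted bound. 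Note the algebra is valid over $\R$, so no squareness hypothesis on $q$ is needed for the inequality itself.

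For the equality case, the key point is that $d=\sqrt{q}+1$ makes the factored expression vanish, so every inequality in the chain above collapses to an equality. The equality $g=(d-1)(d-2)/2$ then forces $\mathcal{C}$ to be nonsingular; the equality $\#\mathcal{C}(\F_q)=q+1+2g\sqrt{q}$ says that $\mathcal{C}$ is $\F_q$-maximal; and $d=\sqrt{q}+1$ (which in particular makes $q=q_0^2$ a square, with $q_0=\sqrt{q}$) exhibits $\mathcal{C}$ as a smooth plane curve of degree $q_0+1$ and genus $q_0(q_0-1)/2$ over $\F_{q_0^2}$. The main obstacle is to identify this extremal curve, and here I would invoke the characterization of maximal curves due to R\"uck and Stichtenoth: an $\F_{q_0^2}$-maximal curve attaining the maximal possible genus $q_0(q_0-1)/2$ is $\F_{q_0^2}$-isomorphic to the Hermitian curve. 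Conversely, the Hermitian curve is smooth, $q$-Frobenius nonclassical, and of degree $\sqrt{q}+1$, with $\#\mathcal{C}(\F_q)=q_0^3+1=d(q-d+2)$, so it realizes equality throughout the chain; this closes the argument.
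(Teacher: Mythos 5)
Your proposal is correct and follows essentially the same route as the paper: combine the lower bound $M_q^S\ge d(q-d+2)$ from Theorem \ref{main0} (after discarding the nonnegative correction terms, which you rightly justify via $g^\ast\ge g$) with the Hasse--Weil bound, factor the resulting inequality as $(d-1)(\sqrt{q}+1)(\sqrt{q}+1-d)\le 0$, and settle the equality case by the R\"uck--Stichtenoth characterization of maximal curves of genus $\sqrt{q}(\sqrt{q}-1)/2$, which is exactly the citation \cite[Theorem 10.47]{HKT} the paper uses. Your write-up merely makes explicit the chain of equalities (nonsingularity from $g=(d-1)(d-2)/2$, maximality, and the converse verification for the Hermitian curve) that the paper leaves to the reader.
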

\begin{proof}
By Theorem \ref{main0} and Hasse-Weil bound, we have 
$$d(q-d+2)\leq M_q^S\leq 1+q+(d-1)(d-2)\sqrt{q}.$$
Since $d(q-d+2)\leq 1+q+(d-1)(d-2)\sqrt{q}$ if and only if
$(d-1)(\sqrt{q}+1)(\sqrt{q}+1-d)\leq 0$, the inequality $d\geq \sqrt{q}+1$ follows.
The additional assertion follows from a well known characterization of the Hermitian curve
(see e.g. \cite[Theorem 10.47]{HKT}).
\end{proof}

\begin{cor} Let $\mathcal{C}$ be a plane curve  defined over $\F_q$ of degree $d$, with $1<d\leq \sqrt{q}$, and genus $g$. Then

$$\#\mathcal{C}(\mathbb{F}_q) \leq \frac{(2g-2)+(q+2)d}{2}.$$
\end{cor}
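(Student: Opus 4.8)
The plan is to recognize that the target inequality is nothing but the Stöhr--Voloch bound \eqref{SV-bound} specialized to the Frobenius classical value $\nu=1$, and that the degree hypothesis $1<d\le\sqrt q$ precisely rules out Frobenius nonclassicality.

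First I would establish that, under the hypothesis, $\mathcal{C}$ must be $q$-Frobenius classical. Corollary \ref{cor1} states that every $q$-Frobenius nonclassical curve of degree $d>1$ satisfies $d\ge\sqrt q+1$. Taking the contrapositive, the assumption $1<d\le\sqrt q$ (so that $d<\sqrt q+1$) forces $\mathcal{C}$ to be \emph{not} $q$-Frobenius nonclassical. By the definition recalled in \eqref{eq:cases}, this means its $q$-Frobenius order is $\nu=1$.

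With $\nu=1$ in hand, I would then apply Theorem \ref{SV} (Stöhr--Voloch) directly. Substituting $\nu=1$ into \eqref{SV-bound} gives
$$\#\mathcal{C}(\F_q)\le\frac{1\cdot(2g-2)+(q+2)d}{2}=\frac{(2g-2)+(q+2)d}{2},$$
which is exactly the asserted inequality.

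As for the main obstacle: there is essentially none of substance, since the whole argument collapses to combining the contrapositive of Corollary \ref{cor1} with the Stöhr--Voloch bound. The only point meriting care is the clean logical deduction that the failure of $d\ge\sqrt q+1$ (within the admissible range $d>1$) forces $\nu=1$ rather than merely excluding one nonclassical value; everything after that is an immediate substitution into \eqref{SV-bound}.
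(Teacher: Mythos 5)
Your proof is correct and follows exactly the paper's argument: the paper's proof likewise combines the contrapositive of Corollary \ref{cor1} (so $1<d\leq\sqrt{q}$ forces $q$-Frobenius classicality, i.e.\ $\nu=1$) with the St\"ohr--Voloch bound of Theorem \ref{SV}. Your added remark that failing $d\geq\sqrt{q}+1$ yields $\nu=1$ outright is justified by the dichotomy in \eqref{eq:cases}, so there is no gap.
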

\begin{proof}
This follows directly from Corollary \ref{cor1} and Theorem \ref{SV}.
\end{proof}

\section{Examples}

One can find several examples of Frobenius nonclassical curves that ilustrate the previous results
(see \cite{Bo} and \cite{Bo1}). Let us consider the particular curve
\begin{equation}\label{ex}
\mathcal{C}: x^4y^2 + x^2y^4 + x^4yz + xy^4z + x^4z^2 + x^2y^2z^2 + y^4z^2 +x^2z^4 + xyz^4 + y^2z^4=0
\end{equation}
over $\mathbb{F}_4$. This curve  has some  remarkable  properties (see \cite{Bo} and \cite{Homma}). One  particular feature of  $\mathcal{C}$ is its $4$-Frobenius nonclassicality.
The set of singular points of $\mathcal{C}$ is the whole of $PG(2,2)$, and  all such singularities are nodes whose tangent lines are defined over $\mathbb{F}_4$. Therefore, Corollary \ref{cor0} gives

$$\#\mathcal{C}(\mathbb{F}_4)=6(4-6+2)+7\cdot 2\cdot (2-1)=14.$$

The next example ilustrates how the  choice of singular  $q$-Frobenius nonclassical curves of degree $d$, over  nonsigular ones of  the same degree, can make a significant  difference with respect to the number of rational points. Consider the curves
$$\mathcal{C}_1: x^{13}=y^{13}+z^{13}$$
and
$$\mathcal{C}_2: x^{13}=y^{13}+y^9z^4+y^3z^{10}+yz^{12}+2z^{13},$$
over $\mathbb{F}_{27}$. They are both $27$-Frobenius nonclassical, and only $\mathcal{C}_1$ is smooth. One can check that $\#\mathcal{C}_1(\mathbb{F}_{27})=208$, whereas $\#\mathcal{C}_2(\mathbb{F}_{27})=280$, in addition to $\mathcal{C}_2$ being of smaller genus.


\printindex

\end{document}